\documentclass[opre,nonblindrev]{informs3_hide}
\OneAndAHalfSpacedXI

\def\theARTICLETOP{}
\ARTICLERULE{}

\usepackage[hidelinks,hypertexnames=false]{hyperref}
\usepackage{mathrsfs,mathtools,amsfonts,bm}
\usepackage{needspace}
\usepackage{float}
\usepackage{tikz}
\usetikzlibrary{arrows.meta,positioning}
\usepackage{natbib}
\bibpunct[, ]{(}{)}{,}{a}{}{,}
\def\bibfont{\small}

\TheoremsNumberedThrough
\ECRepeatTheorems
\EquationsNumberedThrough
\allowdisplaybreaks[4]

\begin{document}

\newcommand{\E}{\mathbb{E}}
\newcommand{\Prob}{\mathbb{P}}
\newcommand{\Z}{\mathbb{Z}}
\newcommand{\ind}{\mathbf{1}}
\newcommand{\eqdist}{\overset{d}{=}}

\TITLE{On Niu's Conjecture for Two-Station Tandem Queues}
\RUNTITLE{Niu's Conjecture}
\RUNAUTHOR{Cao, Hu, and Wang}

\ARTICLEAUTHORS{
\AUTHOR{Hao Cao}
\AFF{School of Management, Fudan University, 
\EMAIL{hcao21@m.fudan.edu.cn}}
\AUTHOR{Jian-Qiang Hu}
\AFF{School of Management, Fudan University, 
\EMAIL{hujq@fudan.edu.cn}}
\AUTHOR{Qixin Wang}
\AFF{School of Management, Fudan University, 
\EMAIL{23110690014@m.fudan.edu.cn}}
}

\ABSTRACT{
In this paper, we prove a long standing conjecture in queueing theory made by \cite{Niu1980}: in a two-station tandem queueing system with constant service time ($c$) at station 1, the stationary average waiting time at station 2 is nonincreasing in \(c\).  Two very different proofs are provided: one for the general case and one for a special case in which the arrival process is Poisson and the service time at station 2 is exponential.
}
\KEYWORDS{Niu's conjecture; tandem queues; sample path derivatives; stochastic comparison}

\maketitle

\section{Introduction}\label{sec:model}
In 1980, \cite{Niu1980} made the following strong conjecture:
\begin{quote}
    {\bf In a two-station tandem queueing system with constant service time ($c$) at station 1 ($GI/D/1 \rightarrow G/1$), the stationary average waiting time at station 2 is nonincreasing in \(c\).}
\end{quote}
Previous results give bounds on the downstream mean waiting time (\citealp{tembewolff1974,Niu1980}), light and heavy traffic approximations (\citealp{wolff1982,harrison1978,glynnwang2023}), and comparisons in light traffic or a special heavy traffic limit (\citealp{greenberg1987}).
Despite these efforts, the conjecture remains open.  Our work in this paper is the first one to show the conjecture is true. We provide two very different proofs: one for the general case and one for the special case with Poisson arrivals and exponential service times at station 2. The rest of the paper is organized as follows.  Section 2 provides some preliminary results.  Section 3 considers the general case, and Section 4 considers the special case. Section 5 concludes the paper. Some technical proofs are provided in the appendix.  

\section{Preliminaries}
We consider two first-come, first-served single-server and unlimited buffer queues in tandem, with renewal arrivals, a constant service time at the first station, and independent, identically distributed (IID) service times at the second.  Customers arrive at station 1 in the order of $\ldots, -3, -2, -1, 0, 1, 2, 3, \ldots$. Let $\tau_k$ be the arrival time of customer $k$ at station 1, and set $\tau_0=0$. Let $A_k=\tau_k-\tau_{k-1}$ be the inter-arrival time between customers $k-1$ and $k$, $c$ the constant service time at station 1, $S_k$ the service time of customer $k$ at station 2. $(A_k)_{k\in\Z}$ and $(S_k)_{k\in\Z}$ are IID and independent of one another.  We use $A$ to denote a generic inter-arrival time and $S$ a generic service time at station 2.  We assume that $A,S,c\geq0$ and $\max\{c,\E[S]\}<\E[A]<\infty$ to ensure the stability of the queueing system. Simultaneous events are processed in customer order.   Let $D_{j,k}$ be the departure time of customer $k$ from station $j$, $W_{j,k}$ the waiting time of customer $k$ at station $j$, and $W_k = W_{1,k}+ W_{2,k}$.  Note that \(D_{j,k}\) and \(W_{j,k}\) are all dependent on \(c\); we may write them as \(D_{j,k}(c)\) and \(W_{j,k}(c)\) if necessary.

Consider a sample path starting with customer \(-N\) and assume that both stations are empty when its first customer \(-N\) arrives.  In our analysis, we will focus on customer~0. Write \(Y_i:=c-A_i\) for \(-N+1\leq i\leq0\) and \(\Xi_m:=\sum_{i=m}^{-1}S_i\) for integers \(m\leq0\), with \(\Xi_0=0\). Then the waiting times of customer \(-N\leq k\leq0\) can be expressed as
\begin{align*}
W_{1,k}&=\max_{-N\le m\le k}\left\{ \sum_{i=m+1}^k(c-A_i) \right\}
=\max_{-N\le m\le k} \left\{ \sum_{i=m+1}^k Y_i\right\},\\
W_k&=\max\nolimits_{-N\le m\le n\le k}\big\{\sum\nolimits_{i=n}^{k-1}S_i+(n-m)c-\sum\nolimits_{i=m+1}^k A_i\big\}\\
&=\max\nolimits_{-N\leq n\leq k}\big\{W_{1,n}-\sum\nolimits_{i=n+1}^k A_i+\sum\nolimits_{i=n}^{k-1}S_i\big\},
\end{align*}
and \(W_{2,k}=W_k-W_{1,k}\). Define \(Z_m:=\max_{m\leq n\leq0}\{\Xi_n+nc\}\), \(\Delta Z_i:=Z_{i-1}-Z_i\), \(U_i:=Y_i+\Delta Z_i\), where \(-N\leq m\leq0\), \(-N+1\leq i\leq0\), and \(Z_0=0\). Then we have
\begin{align*}
W_0 & = \max_{-N\le m\le n\le 0} \left \{\Xi_{n}+(n-m)c-\sum_{i=m+1}^0 A_i \right \}  = \max_{-N\leq m\leq 0} \left\{ \sum_{i=m+1}^0 U_i\right\}.
\end{align*}
Let \(b_j(k)\) denote the index of the first customer in the busy period containing customer \(k\) at station \(j\) (\(j=1,2\)). Then
\begin{align*}
 b_1(k)&=\argmax\nolimits_{-N\leq m\leq k}\big\{\sum\nolimits_{i=m+1}^{k}Y_i\big\},\\
 b_2(0)&=\argmax\nolimits_{-N\leq n\leq0}\big\{W_{1,n}-\sum\nolimits_{i=n+1}^{0}A_i+\Xi_n\big\},\\
 b_1(b_2(0))&=\argmax\nolimits_{-N\leq m\leq0}\big\{\sum\nolimits_{i=m+1}^{0}U_i\big\}.
\end{align*}
In the above definitions, we have implicitly assumed that the maximizers are almost surely unique for each fixed \(c\). This assumption holds under some very mild conditions, e.g., when \(A\) and \(S\) are continuous random variables. We make this assumption mainly for simplicity of notation in our derivation. Using the technique of infinitesimal perturbation analysis (IPA), we can obtain the sample path derivatives of \(W_{1,0}\), \(W_{2,0}\), and \(W_0\) (with respect to \(c\)) as follows (\citealt{hu1992}):
\begin{align}\label{eq_IPA}
 \frac{dW_{1,0}\left(c\right)}{dc}&=-b_1\left(0\right),~
 \frac{dW_0\left(c\right)}{dc}=b_2\left(0\right)-b_1\left(b_2\left(0\right)\right),~
 \frac{dW_{2,0}\left(c\right)}{dc}=b_2\left(0\right)-b_1\left(b_2\left(0\right)\right)+b_1\left(0\right).
\end{align}
For the tandem queueing system considered here, the sample path derivatives are both unbiased for each finite $N$ and strongly consistent in steady state (as $N\rightarrow \infty$) (\citealt{hu1992,ghs1991}).  Therefore, to prove Niu's conjecture, it suffices to show
\begin{align}\label{eq_mainresult}
 \E\left[\frac{dW_{2,0}\left(c\right)}{dc}\right]
 &=\E\left[b_2\left(0\right)-b_1\left(b_2\left(0\right)\right)+b_1\left(0\right)\right]\leq0,
\end{align}
for each finite $N$.

Before closing this section, we present the main result of our paper:
\begin{theorem}\label{thm:mean}
(\ref{eq_mainresult}) holds in steady state, i.e., Niu's conjecture is true. 
\end{theorem}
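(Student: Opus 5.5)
Throughout we condition on the station-2 service times $(S_i)$, which are independent of $(A_i)$, and let $N\to\infty$. Write $L(k):=k-b_1(k)\ge 0$ for the number of customers that precede $k$ in its station-1 busy period. By (\ref{eq_IPA}),
\[
b_2(0)-b_1(b_2(0))+b_1(0)=L(b_2(0))-L(0).
\]
So (\ref{eq_mainresult}) is equivalent to
\[
\E[L(b_2(0))]\le \E[L(0)].
\]
In words: the customer who opens the station-2 busy period containing customer~0 sits, on average, no deeper in its station-1 busy period than a typical customer. The heuristic is that a customer deep in a station-1 busy period leaves station 1 only $c$ after its predecessor, so it is more likely to find station 2 occupied. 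Hence it is less likely to start a station-2 busy period.

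\textbf{Step 1 (reduction to a covariance).} In steady state, $(Y_i)$ is stationary and independent of $(S_i)$, so $\E[L(n)\mid S]=\E[L(0)]$ for every $n\le 0$. Decomposing over the value of $b_2(0)$ gives
\[
\E[L(b_2(0))\mid S]-\E[L(0)]=\sum_{n\le 0}\mathrm{Cov}\bigl(L(n),\ind\{b_2(0)=n\}\,\big|\,S\bigr).
\]
It therefore suffices to show that each covariance is $\le 0$, for a fixed deterministic service sequence $(S_i)$. This step needs a uniform integrability argument (finite $\E[L(0)]$ under $c<\E[A]$) to justify the interchange of sum and expectation and the passage $N\to\infty$.

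\textbf{Step 2 (monotonicity structure).} Fix $n$ and write
\[
\{b_2(0)=n\}=E_1\cap E_2,
\]
where $E_1=\{D_{1,n}\ge D_{2,n-1}\}$ says customer $n$ finds station~2 empty, and $E_2$ says customers $n+1,\dots,0$ never empty station~2 again. The event $E_2$ depends only on $W_{1,n}$, on $Y_{n+1},\dots,Y_0$ and on $S$, and it is increasing in $W_{1,n}$ and in those $Y$'s. Now condition on $\mathcal F_n:=\sigma(Y_{n+1},\dots,Y_0)$ and on the station-1 state at $n$.

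The key quantity is the station-2 workload seen by customer $n$. It equals the station-2 contribution in the $\max$ formula for $W_n$, namely $\max_{m<n}\{W_{1,m}-\sum_{i=m+1}^n A_i+\sum_{i=m}^{n-1}S_i\}-W_{1,n}$. I would show that, as a function of $(Y_i)_{i\le n}$, this quantity is coordinatewise increasing in the $Y$'s belonging to the current station-1 busy period. The reason is that shrinking interarrival times inside the busy period compresses the departures from station~1 to spacing $c$ and piles up work at station~2. Consequently $\ind_{E_1}$ is decreasing in those coordinates, while $L(n)$ is increasing in them.

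\textbf{Step 3 (correlation inequality).} Apply Harris--FKG to the independent coordinates $(Y_i)_{i\le n}$, conditionally on $\mathcal F_n$ and $W_{1,n}$. This gives that $L(n)$ and $\ind_{E_1}$ are negatively correlated. The factor $\ind_{E_2}$ must then be handled separately. Since it depends on the past only through $W_{1,n}$, I would condition additionally on $W_{1,n}$ and check that FKG still applies on the conditional product measure restricted to a lattice sublevel set.

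\textbf{Main obstacle.} Steps 2--3 are where I expect the real difficulty. Neither $L(n)$ nor $\ind_{E_1}$ is globally monotone in all $Y_i$, because raising an early $Y$ can merge busy periods and change $b_1(n)$. Conditioning on $W_{1,n}$ also destroys the product structure.

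My first attempt would be a pathwise swapping argument instead of FKG. I would pair each sample path with the one obtained by reversing the order of $(Y_{b_1(n)+1},\dots,Y_n)$, or by an exchangeable permutation of the interarrival times within the busy period. The aim is to show that paths with larger $L(n)$ are mapped injectively to paths with smaller $L(n)$ on which $E_1$ is at least as likely.

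If that fails, I would fall back to proving the one-dimensional statement that $\Prob(b_2(0)=n\mid L(n)=\ell,S)$ is nonincreasing in $\ell$, by an explicit coupling of the busy-period lengths. That monotonicity suffices for the covariance sign in Step~1.
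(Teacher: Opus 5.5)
Your identity $b_2(0)-b_1(b_2(0))+b_1(0)=L(b_2(0))-L(0)$ is correct, and so is the covariance decomposition in Step~1. The gap is the sufficient condition you reduce to: it is false. The individual terms $\mathrm{Cov}(L(n),\ind\{b_2(0)=n\}\mid S)$ can be strictly positive, so Steps~2--3 and the fallback aim at a statement that cannot be proved. The source is the factor you set aside. $E_2$ is increasing in $W_{1,n}$, and $W_{1,n}$ is positively tied to $L(n)$; indeed $L(n)=0$ iff $W_{1,n}=0$. When $E_1$ is nearly certain, this channel dominates.

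Concretely, take $n=-1$ and a service realization with $S_i<c$ for $-N\le i\le -2$ and $S_{-1}>c$. Station-1 departures are at least $c$ apart, so every customer up to $-1$ finds station~2 empty. Since $D_{1,0}-D_{1,-1}=\max\{c,A_0-W_{1,-1}\}$, you get $\{b_2(0)=-1\}=\{A_0<S_{-1}+W_{1,-1}\}$. The covariance is therefore $\mathrm{Cov}(L(-1),g(W_{1,-1}))$ with $g(w)=\Prob(A<S_{-1}+w)$ nondecreasing. Let $Y$ be concentrated near $+1$ (probability $p$) and $-2$, with $c=2$ and $S_{-1}=2.5$. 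Then $g\approx p+(1-p)\ind\{W_{1,-1}\ge 2\}$, and for small $p$ the covariance is about $2p^2>0$. Likewise $\Prob(b_2(0)=-1\mid L(-1)=\ell,S)$ rises from about $p$ at $\ell=0$ to about $1$ at $\ell=2$, so your fallback monotonicity fails too. Averaging over $S$ does not help: $\E[L(n)\mid S]$ does not depend on $S$, so the unconditional covariance is the $S$-average of these, and it is positive when $\Prob(S>c)$ is small.

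In this example the inequality survives only through the term $n=0$. On $\{b_2(0)=0\}=\{A_0\ge S_{-1}+W_{1,-1}\}$, customer~0 also starts a new station-1 busy period, so $L(0)=0$ there. Hence $\E[L(b_2(0))\mid S]=\E[L(-1)\ind\{A_0<S_{-1}+W_{1,-1}\}]\le\E[L(0)]$. The compensation happens across different $n$, so the missing idea is a global comparison, not an index-by-index correlation inequality.

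The paper supplies such a comparison:
\begin{enumerate}
\item Using only $S$, it marks the positions with $Z_i>Z_{i+1}$.
\item It deletes blocks that start at marks and extend while the $U$-sum stays positive.
\item It inspects only the deleted $Y$'s, so the retained vector $\mathbf Y'$ is, given its length, IID with the law of $Y$.
\item It shows pathwise that the whole stretch $\{b_1(b_2(0)),\dots,b_2(0)-1\}$ is retained.
\item Because completed blocks have nonpositive sums, the maximizing suffix of $\mathbf Y'$ is at least as long as the retained suffix starting at $b_1(b_2(0))$.
\end{enumerate}
This gives $b_2(0)-b_1(b_2(0))\le -J(\mathbf Y')\le_{\rm st}-J(\mathbf Y)=-b_1(0)$. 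The last comparison holds because prepending IID increments cannot shorten the maximizing suffix.
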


\section{General case} \label{sec:main results}
Fix \(c\) and the second service vector \((S_{-N},\ldots,S_{-1})\). Expectations and stochastic comparisons in the deletion argument are conditional on this vector. For a vector \(\mathbf{v}=(v_{-n+1},\ldots,v_0)\) of dimension \(n\geq0\) with a unique maximizing suffix, define \(J(\mathbf{v}):=\argmax_{-n\leq m\leq0}\{\sum_{i=m+1}^{0}v_i\}\). It is obvious that \(b_1(0)=J(\mathbf{Y})\), where \(\mathbf{Y}=(Y_{-N+1},\ldots,Y_0)\).

The main idea of our proof is to construct a subset of \(\{-N,-N+1,\ldots,-1\}\), \(\mathcal R=\{i_1,i_2,\ldots,i_l\}\), with \(l=|\mathcal R|\), \(0\leq l\leq N\), and \(-N\leq i_1<\cdots<i_l\leq-1\), such that conditional on \(l\), \(\{Y_{i_j+1},1\leq j\leq l\}\) are IID with the same distribution as \(Y_i\), and
\begin{align}\label{eq_1}
 b_2(0)-b_1(b_2(0))\leq -J(\mathbf{Y}'),
\end{align}
where \(\mathbf{Y}'=(Y_{i_1+1},\ldots,Y_{i_l+1})\), with coordinates numbered from \(-l+1\) to \(0\).
Conditional on \(l\), adding \(N - l\) independent increments to the left of \(\mathbf{Y}'\) gives a vector with the law of \(\mathbf{Y}\). Every original suffix sum is unchanged, so this extension cannot decrease the length of the maximizing suffix. Averaging over \(l\) gives
\begin{align}\label{eq_2}
 -J(\mathbf{Y}')&\leq_{\rm st} -J(\mathbf{Y}),
\end{align}
where \(\leq _{st}\) denotes ``stochastically less than''. Putting (\ref{eq_1}) and (\ref{eq_2}) together leads to \(\E[b_2(0)-b_1(b_2(0))+b_1(0)]\leq\nobreak0\), which proves (\ref{eq_mainresult}).

To construct \(\mathcal R\), we use the following \textbf{deletion rule}. Define \(\mathcal K:=\{i\in\{-N,\ldots,-1\}:\Delta Z_{i+1}>0\}\). The position \(i\) corresponds to the increment \(Y_{i+1}\). Note that \(Z_i\geq Z_{i+1}\). This difference is determined only by the second service vector for fixed \(c\). 

\begin{enumerate}
\item \emph{Initialization.} Set $p:=-N$, $\mathcal R:=\emptyset$.
\item \emph{Find the leftmost undecided mark.} If $\mathcal K\cap\{p,\ldots,-1\}$ is empty, add all of $p,\ldots,-1$ to $\mathcal R$ and stop. If $p=0$, this set is empty. No value $Y_i$ is checked in this step. Otherwise, set
\(\beta:=\min(\mathcal K\cap\{p,\ldots,-1\})\).
Every index from \(p\) to \(\beta-1\)  is unmarked. Add those indices to \(\mathcal R\), again without checking their \(Y_{i+1}\) values. The next block will lie entirely at or to the right of \(\beta\).

\item \emph{Start the block.} Set \(\alpha:=\beta\), check \(Y_{\beta+1}\), and initialize the block sum by \(Q:=U_{\beta+1}\).
\item \emph{Extend right while the sum is positive.} If \(Q>0\) and \(\alpha<-1\), first replace \(\alpha\) by \(\alpha+1\), then check the newly included value \(Y_{\alpha+1}\) and add \(U_{\alpha+1}\) to \(Q\). Repeat this step while both conditions hold. At every stage, \(Q=\sum_{i=\beta+1}^{\alpha+1}U_i\).
Thus the decision to include the next position uses only the current sum. The new value is checked \emph{after} that position has been chosen.

\item \emph{Delete the block and continue or stop.} If \(Q\le0\), remove \([\beta,\alpha]\). Call this a \emph{completed block}. Replace \(p\) by \(\alpha+1\) and return to step 2. If \(Q>0\), the preceding step can have stopped only because \(\alpha=-1\). Remove \([\beta,-1]\) and stop; call this a \emph{positive leading block}. A sum of zero always belongs to the completed-block case.
\end{enumerate}

After a completed block \([\beta, \alpha]\), all later blocks lie in \(\alpha+1,\ldots,-1\). Thus blocks cannot overlap, and every check concerns a new position. The procedure stops after at most \(N\) checks. Each checked position belongs to a block that is then deleted. No position added to \(\mathcal R\) is checked. Conversely, all positions of a deleted block were checked in step 3 or step 4. At termination, \(\mathcal R\) is exactly the set of positions not deleted.

We note that an unmarked position is not automatically safe from deletion: it can be included when a block extends right in step 4. The positions skipped without checking are those to the \emph{left} of the current leftmost mark in step 2, or those left at the end when there is no mark. If a gap remains between two deleted blocks, this is how its positions were skipped.

Since 1) \(\mathcal K\) is entirely determined by \(\{S_i, -N\leq i\leq-1\}\), which are independent of \(\{Y_i, -N+1\leq i\leq 0\}\), and 2) \(\{Y_{i+1}, i\in \mathcal R\}\) are not checked during the construction of \(\mathcal R\), \(\{Y_{i+1}, i\in \mathcal R\}\) retain their original distributions, i.e., conditional on \(l\), they are IID with the same distribution as \(Y_i=c-A_i\). To verify this, condition on any complete sequence of checked indices: the event specifying this sequence depends only on the checked values, so independence leaves the complementary coordinates with their original product distribution. Summing over sequences with the same number of checks gives the assertion conditional on \(l\).

Next, we show \(b_2(0)-b_1(b_2(0))\leq -J(\mathbf{Y}')\). If \(b_2(0)-b_1(b_2(0))=0\), then \(b_2(0)-b_1(b_2(0))\leq -J(\mathbf{Y}')\) follows from \(-J(\mathbf{Y}')\geq 0\).  So we only need to consider \(b_2(0)-b_1(b_2(0))>0\). For a completed block \(I=[\beta, \alpha]\), define its sum by \(Q_I:=\sum_{i=\beta+1}^{\alpha+1}U_i\). Let \(\cal D\) be the collection of completed blocks, and let \(\beta_0\) be the left endpoint of the positive leading block; set \(\beta_0:=0\) if there is no such block. For \(I=[\beta,\alpha]\), write \(\min I:=\beta\). For each remaining position \(i\in\mathcal R\), define \(C(i):=-\sum_{I\in\mathcal D:\,\min I\geq i}Q_I\). Thus \(C(i)\) is the negative of the total of the completed blocks after \(i\). The possible positive leading block is not included in \(\mathcal D\) or \(C(i)\); its sum is \(\sum_{i=\beta_0+1}^{0}Y_i+Z_{\beta_0}=\sum_{i=\beta_0+1}^{0}U_i\), which is zero if \(\beta_0=0\).

\medskip\noindent\emph{Step 1. The position \(b_1(b_2(0))\) is not deleted.} We check the positions after a block's left endpoint and the endpoint itself separately. A block \([\beta,\alpha]=\{\beta,\ldots,\alpha\}\) includes both endpoints. For a completed block \(I=[\beta,\alpha]\), the stopping rule gives
\mbox{\(Q_I=\sum_{i=\beta+1}^{\alpha+1}U_i\leq0\)}.
For each \(\beta<i\leq \alpha\), the rule extended \([\beta,i-1]\) to include position \(i\). That extension was possible only because the sum over \([\beta,i-1]\) was positive. Hence
\(\sum_{j=\beta+1}^{i}U_j>0\). It follows that \(\sum_{j=i+1}^{0}U_j<\sum_{j=\beta+1}^{0}U_j\leq\sum_{j=b_1(b_2(0))+1}^{0}U_j\) for every \(i=\beta+1,\ldots,\alpha\), so none of these positions is \(b_1(b_2(0))\). At the remaining position \(i=\beta\), \(Z_i=\max\{Z_{i+1},\Xi_{i}+ic\}>Z_{i+1}\), which implies \(Z_i=\Xi_{i}+ic\) and \(i\) is its unique maximizing service index. If \(b_1(b_2(0))=i=\beta\), the maximizing pair \((b_1(b_2(0)),b_2(0))\) must therefore have \(b_2(0)=b_1(b_2(0))\), contradicting \(b_2(0)-b_1(b_2(0))>0\). Therefore, \(b_1(b_2(0))\neq\beta\).

For an existing positive leading block \([\beta_0,-1]\), every sum checked during its formation was positive, including its full sum. Therefore \(\sum_{j=\beta_0+1}^{i}U_j>0\) for \(\beta_0<i\leq-1\), which gives \(\sum_{j=i+1}^{0}U_j<\sum_{j=\beta_0+1}^{0}U_j\leq\sum_{j=b_1(b_2(0))+1}^{0}U_j\), so \(i\ne b_1(b_2(0))\). We can similarly show \(\beta_0\ne b_1(b_2(0))\). Thus the leading block does not delete \(b_1(b_2(0))\) either, despite having a positive total. We conclude that \(b_1(b_2(0))\in\mathcal R\).

\medskip\noindent\emph{Step 2. No position from \(b_1(b_2(0))\) to \(b_2(0)-1\) is deleted.} Define \(\mathcal A:=\{b_1(b_2(0)),\ldots,b_2(0)-1\}\).
The unique maximizing pair makes \(b_2(0)\) the unique index attaining \(Z_{b_1(b_2(0))}\). Thus \(Z_i=Z_{i+1}\) for \(b_1(b_2(0))\leq i<b_2(0)\), so no position in \(\mathcal A\) is marked. Suppose a deleted block \([\beta,\alpha]\) intersects \(\mathcal A\). Its left endpoint \(\beta\) is marked and hence cannot belong to \(\mathcal A\). An intersection therefore requires \(\beta<b_1(b_2(0))\leq\alpha\), which puts \(b_1(b_2(0))\) in the deleted block, contradicting step 1. Therefore \(\mathcal A\subseteq\mathcal R\).

\medskip\noindent\emph{Step 3. Compute the partial sums after deletion.} For \(i\in\mathcal R\), let \(\ell(i):=-|\mathcal R\cap\{i,\ldots,-1\}|\), the corresponding starting index after deletion. Since all \(b_2(0)-b_1(b_2(0))\) positions in \(\mathcal A\) remain and are at least \(b_1(b_2(0))\), we have \(-\ell(b_1(b_2(0)))\geq b_2(0)-b_1(b_2(0))\). We should emphasize the fact that the choice of \(b_1(b_2(0))\) is only part of the proof; the deletion rule does not search for it or check the values at positions in \(\mathcal A\). For a remaining position \(i\in\mathcal R\), we have \(\sum_{j=i+1}^{0}U_j=\sum_{j\in\mathcal R,j\geq i}U_{j+1}+\sum_{j=\beta_0+1}^{0}U_j+\sum_{I\in\cal D,\min I\geq i}Q_I\), which can be rewritten as \(\sum_{j\in\mathcal R,j\geq i}U_{j+1}=\sum_{j=i+1}^{0}U_j-\sum_{j=\beta_0+1}^{0}U_j+C(i)\).
Since every completed-block total satisfies $Q_I\le0$, for $i,j\in\mathcal R$ with $i\geq j$ we have
\begin{align}\label{eq_correction-order}
 C\left(i\right)&\geq0,\qquad
 C\left(j\right)-C\left(i\right)=-\sum_{\substack{I\in\cal D\\j\leq\min I<i}}Q_I\geq0.
\end{align}
\Needspace{6\baselineskip}
\medskip\noindent\emph{Step 4. Prove \(J(\mathbf{Y}')\leq\ell(b_1(b_2(0)))\).}
For any remaining position \(i\geq b_1(b_2(0))\), we have
\begin{align*}
 \sum_{\substack{j\in\mathcal R\\j\geq b_1(b_2(0))}}U_{j+1}
 -\sum_{\substack{j\in\mathcal R\\j\geq i}}U_{j+1}
 &=\sum_{j=b_1(b_2(0))+1}^{0}U_j-\sum_{j=i+1}^{0}U_j+C\left(b_1(b_2(0))\right)-C\left(i\right)\geq0.
\end{align*}
The first difference is nonnegative because \(\sum_{j=b_1(b_2(0))+1}^{0}U_j\) is largest among the original partial sums; the second is nonnegative by \eqref{eq_correction-order}. Since \(\beta_0\in\{-N,\ldots,0\}\), we also have \(\sum_{j=b_1(b_2(0))+1}^{0}U_j\geq\sum_{j=\beta_0+1}^{0}U_j\), and hence \(\sum_{j\in\mathcal R,j\geq b_1(b_2(0))}U_{j+1}=\sum_{j=b_1(b_2(0))+1}^{0}U_j-\sum_{j=\beta_0+1}^{0}U_j+C(b_1(b_2(0)))\geq0\).
The retained entries \(Y_{j+1}\) with \(j\geq b_1(b_2(0))\) form a suffix of \(\mathbf{Y}'\). Since \(U_{j+1}=Y_{j+1}\) at retained positions, its sum is nonnegative and no smaller than any shorter suffix sum. Hence \(J(\mathbf{Y}')\leq\ell(b_1(b_2(0)))\), and \(-J(\mathbf{Y}')\geq-\ell(b_1(b_2(0)))\geq b_2(0)-b_1(b_2(0))\).

With the above four steps, we have completed our proof for Theorem \ref{thm:mean}.

\section{A special case: \(M/D/1 \rightarrow M/1\)}
In this section, we give a different proof of Theorem \ref{thm:mean} for the case in which both \(A\) and \(S\) are exponentially distributed with rates \(\lambda\) and \(\mu\), respectively. The basic idea of our approach is to try to calculate \(\E[b_2(0)-b_1(b_2(0))+b_1(0)]\) more explicitly. Throughout this section, we fix \(0<\lambda<\mu\), take \(0<c<1/\lambda\), and assume the system is in steady state. All the proofs for lemmas in this section are provided in the appendix.

A \emph{departure group} from station 1 consists of the customers served during one busy period at the station. Let \(B\) be the size of a busy period, and let \(\nu_n=\Prob\{B=n\}\). For customer \(k\), write \(B_k\) for the size of its group, \(R_k\) for its position counted from the beginning, and \(H_k=B_k-R_k+1\) for its position counted from the end. When the dependence on \(c\) must be displayed, we write \(B_k(c),R_k(c),H_k(c)\); otherwise it is suppressed. Since the first queue is an \(M/D/1\) queue, we have the following lemma (\(\rho=\lambda c\)). 

\begin{lemma}[Reversible departure sequence]\label{lem:reversal}
Within each group, departures are spaced by \(c\). Under group indexing, between successive groups the gap is \(c+E\), where the additional gaps \(E\) are independent exponential variables of rate \(\lambda\), independent of the group sizes. The variable \(B\) has an exponential tail,
\begin{equation}\label{eq_group_size}
 \E[B]=\frac{1}{1-\rho},\qquad \nu_2=\rho e^{-2\rho}>0,
\end{equation}
and, at a stationary customer,
\begin{equation}\label{eq_group_palm}
 \Prob\{B_0=n,R_0=r\}=\frac{\nu_n}{\E[B]},\qquad 1\le r\le n.
\end{equation}
For each fixed \(c\), the marked stationary departure process satisfies
\begin{equation}\label{eq_marked_reversal}
 \big(D_{1,k}-D_{1,0},R_k,H_k,S_k\big)_{k\in\Z}
 \eqdist
 \big(D_{1,0}-D_{1,-k},H_{-k},R_{-k},S_{-k}\big)_{k\in\Z}.
\end{equation}
Thus the process is invariant in distribution under customer-index reversal, with the beginning and end position marks exchanged. In particular, \(\E[R_0]=\E[H_0]<\infty\).
\end{lemma}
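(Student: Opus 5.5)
The approach is to work with the busy-period structure of the $M/D/1$ queue at station 1 and use the memorylessness of the Poisson arrivals.

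\emph{Spacing within and between groups.} Within a busy period each customer is served for exactly $c$ time units back to back, so consecutive departures in a group are spaced by $c$. When a busy period ends at the departure of its last customer, station 1 is empty. The next arrival occurs after a residual interarrival time, which by memorylessness is exponential of rate $\lambda$ and independent of the past. That customer departs $c$ later, so the gap between the last departure of one group and the first departure of the next is $c+E$. Each busy period is generated by the arrivals falling in successive service windows, i.e.\ a Poisson branching (Borel) structure started at its first arrival. Hence the group size $B$ is a function of the Poisson process after that first arrival, and is independent of $E$ and of all previous groups; the group sizes are IID.

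\emph{Distribution and tail of $B$.} $B$ is the total progeny of a Galton--Watson process with Poisson$(\rho)$ offspring, so $B$ is Borel distributed: $\nu_n=e^{-\rho n}(\rho n)^{n-1}/n!$. This gives $\nu_2=\rho e^{-2\rho}$ and $\E[B]=1/(1-\rho)$, the latter also from Wald's identity $\E[B]=1+\rho\E[B]$. For the exponential tail, use Stirling: $\nu_n\sim (2\pi)^{-1/2}n^{-3/2}\rho^{-1}(\rho e^{1-\rho})^n$, and $\rho e^{1-\rho}<1$ for $\rho<1$. Alternatively, use the Chernoff bound on the random-walk hitting time.

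\emph{Palm distribution at a stationary customer.} Customers are indexed consecutively, and the groups form an IID sequence of sizes. The customer-stationary version, obtained by the renewal-reward or Palm inversion formula, selects a group size-biased, giving $n\nu_n/\E[B]$. The position $R_0$ is then uniform on $\{1,\dots,n\}$. Together these give \eqref{eq_group_palm}.

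\emph{Reversal identity \eqref{eq_marked_reversal}.} The marked process $(D_{1,k},R_k,H_k)$ is a deterministic function of the sequence of IID pairs (group size, preceding gap $c+E$). The $S_k$ are IID and independent of everything. Reversing the customer index reverses the order of the groups, and the IID sequence of sizes and gaps is invariant under reversal. Within a group the deterministic spacing $c$ is symmetric, and the roles of first and last positions swap, $R\leftrightarrow H$.

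Two alignment points need care. First, gaps attach to group boundaries, and reversal keeps each gap between the same two groups. Second, the Palm selection of customer $0$ is symmetric, since $R_0$ and $H_0$ are both uniform given $B_0$. The stationary sequences (size-biased group containing $0$, uniform position, IID elsewhere) therefore have equal laws. From this, $\E[R_0]=\E[H_0]=\E[B_0(B_0+1)/2]/\E[B]\cdot$, after normalizing by $n\nu_n$. This is finite because $\E[B^2]<\infty$ by the exponential tail.

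The main obstacle is making the Palm construction and the reversal rigorous simultaneously, in particular checking that conditioning on customer $0$ does not break the symmetry between groups before and after it. I would handle this by writing the stationary sequence explicitly as a two-sided IID sequence with an inserted size-biased group and a uniform position, and verifying invariance directly.
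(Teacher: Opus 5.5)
Your proposal is correct and follows essentially the paper's route: Borel law for $B$, group sizes and $\mathrm{Exp}(\lambda)$ idle gaps forming independent IID sequences, a size-biased central group with a uniform position at customer~0, and invariance under reversal. The reversal step works because re-pairing each group with its preceding rather than following gap (the paper's $(B^{(g)},E_g)\mapsto(B^{(g)},E_{g-1})$) does not change the law of two independent IID sequences. One cosmetic slip: the correct expression is $\E[R_0]=\E[(B_0+1)/2]=\E[B(B+1)]/(2\E[B])$ with $B\sim\nu$ rather than the size-biased $B_0$, which is finite since $\E[B^2]<\infty$.
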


Recall the index \(b_2(0)\) is the first customer in customer 0's busy period at station 2 and \(b_1(b_2(0))\) is the first customer of the busy period containing \(b_2(0)\) at station 1; therefore, \(b_2(0)-b_1(b_2(0))=R_{b_2(0)}-1\).  The same argument at station 1 gives \(-b_1(0)=R_0-1\).  Then together with the reversibility result in Lemma~\ref{lem:reversal}, we have
\begin{lemma}[Centered derivative]\label{lem:centered}
\begin{align}\label{eq_centered}
 \frac{d}{dc}\E[W_{2,0}(c)]
 &=\E[b_2(0)-b_1(b_2(0))+b_1(0)]
 =\E[R_{b_2(0)}-R_0] \nonumber\\
 &=\E[H_0-H_{b_2(0)}]
 =\frac{\E[R_{b_2(0)}-H_{b_2(0)}]}{2}.
\end{align}
\end{lemma}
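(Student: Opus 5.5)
The plan is to prove the four equalities in (\ref{eq_centered}) one at a time. Lemma~\ref{lem:reversal} and the IPA representation (\ref{eq_IPA}) supply everything needed.

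\emph{First equality.} This is the unbiasedness and strong consistency of the IPA derivative quoted after (\ref{eq_IPA}), together with the identification of \(dW_{2,0}/dc\). In steady state we also need uniform integrability of \(b_2(0)-b_1(b_2(0))+b_1(0)\) in order to interchange the limit \(N\to\infty\) with the expectation. I would bound this quantity by the size of the station-2 busy period containing customer~0 plus the size of its station-1 group. Both have finite means: \(B\) has an exponential tail by Lemma~\ref{lem:reversal}, and the station-2 busy period in customer count has an exponential tail when \(\mu>\lambda\).

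\emph{Second equality.} Here we use the observations stated just before the lemma. Customer \(b_2(0)\) lies in the station-1 group starting at \(b_1(b_2(0))\), and customers within a group are consecutive, so \(b_2(0)-b_1(b_2(0))=R_{b_2(0)}-1\). Likewise \(-b_1(0)=R_0-1\). Summing gives \(R_{b_2(0)}-R_0\). All terms are integrable, since \(R_{b_2(0)}\le B_{b_2(0)}\), and \(B_{b_2(0)}\) is bounded by the total number of customers in the station-1 groups meeting customer~0's station-2 busy period.

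\emph{Third equality.} This is the main step, and it uses reversal. The index \(b_2(0)\) is a functional of the station-1 departure times and the \(S_k\); namely, it is the start of the station-2 busy period containing customer~0. Under the customer-index reversal (\ref{eq_marked_reversal}), a station-2 busy period read backwards is not the same kind of object. The idea is therefore to use the symmetric construction. Write \(\beta=b_2(0)\) and let \(e\) be the last customer of the station-2 busy period containing 0. I would then show \(\E[R_\beta-R_0]=\E[H_0-H_e]\) by mass transport (the Palm/cycle formula) over station-2 busy periods. Summing \(R_\beta-R_k\) over the customers \(k\) of a busy period and dividing by its expected length gives the left side. I would then pass to the third expression \(\E[H_0-H_{b_2(0)}]\) by a second mass transport inside the busy period, showing that the busy-period averages of \(H_k\) and \(R_k\) relate to the endpoint marks as required.

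Honestly, the cleanest route is probably a direct one. Since \(\E[R_0]=\E[H_0]\) by Lemma~\ref{lem:reversal}, it suffices to show \(\E[R_{b_2(0)}+H_{b_2(0)}]=2\E[R_0]\), and this also gives the fourth equality. Now \(R_k+H_k=B_k+1\), so the claim becomes \(\E[B_{b_2(0)}]+1=2\E[R_0]=\E[B_0]+1\). In other words, the station-1 group containing the first customer of 0's station-2 busy period should have the same size-biased law as \(B_0\). I expect this to be the main obstacle, and I would try to derive it from the independence in Lemma~\ref{lem:reversal}.

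Conditionally on the group sizes, station 2 fed by these departures is driven by the gaps \(c+E\) and the services \(S\), which are independent of the group sizes. A group has no internal idle gap in the arrival stream, and a station-2 busy period starts at the first customer of a group exactly when that customer finds station 2 empty. In the \(M/1\) case the probability of this depends on the group only through the exponential gap \(E\) preceding it, not through its size. So by a stationary mass-transport argument on groups, the group containing \(b_2(0)\) is sampled size-biased. With the equalities \(\E[R_0]=\E[H_0]\) and \(R+H=B+1\), all four expressions in (\ref{eq_centered}) then coincide.

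If this size-bias claim fails because a station-2 busy period can start in the middle of a group, I would fall back on the pathwise reversal argument described above.
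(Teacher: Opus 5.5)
Your second and fourth equalities are fine, and your reduction of the third is algebraically right: since $R_k+H_k=B_k+1$, the identity $\E[R_{b_2(0)}-R_0]=\E[H_0-H_{b_2(0)}]$ is equivalent to $\E[B_{b_2(0)}]=\E[B_0]$. That reduced identity is in fact true (it follows from the lemma), but nothing in your proposal proves it.

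The mass-transport identity $\E[f_{b_2(0)}]=\E[f_0L_0]$ gives $\E[B_{b_2(0)}]=\E[B_0L_0]$. Here $L_0$ is the size of the station-2 busy period started by customer~0, and $\E[L_0]=1$. So what you need is that $B_0$ and $L_0$ are uncorrelated. Independence of the downstream state at a group's first arrival from that group's size does not give this, for two reasons.
\begin{enumerate}
\item With exponential service, station~2 empties during within-group gaps of length $c$ with positive probability ($p_r>0$ for all $r$). So busy periods do start mid-group, in exactly the setting of the lemma.
\item Even if busy periods started only at group starts, your observation would only show that the group size at a busy-period start, sampled per busy period, has the ordinary law $\nu$. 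Size bias can arise only after reweighting by busy-period length. That step requires the expected length of a busy period to be proportional to the size of the group in which it starts, which is a separate fact (in such examples it comes from Wald's identity and the memoryless undershoot of $E$).
\end{enumerate}
In general $L_0$ depends on $B_0$ through $R_0$ ($p_r$ is nonincreasing in $r$). It also depends on $B_0$ through $B_0-R_0$: by Lemma~\ref{lem:position}, $t_m(1)$ is nondecreasing in $m$ with $t_1(1)>t_0(1)$. Your argument never controls this correlation. Your fallback, a mass transport linking the first and last customers of a station-2 busy period, is not specified. As you note yourself, reversal does not carry station-2 busy periods to station-2 busy periods.

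The missing idea is to reverse the Lindley representation rather than the busy period. By Lemma~\ref{lem:reversal} and the IID service marks, for every fixed $c$,
\[
\E[W_{2,0}(c)]=\E\Big[\max_{j\ge0}\Big\{\sum_{i=0}^{j-1}S_i-D_{1,j}(c)+D_{1,0}(c)\Big\}\Big].
\]
The backward and forward maxima are different sample-path functionals with the same mean as functions of $c$, so their IPA derivatives coincide. Since $dD_{1,k}/dc=R_k$:
\begin{enumerate}
\item the backward maximum gives $\E[R_{b_2(0)}-R_0]$;
\item the forward maximum gives $\E[R_0-R_{j^*}]$, where $j^*$ is the forward maximizer;
\item reversing the latter with service marks $S_{-k-1}$ sends $j^*$ to $b_2(0)$ and $R$ to $H$, yielding $\E[H_0-H_{b_2(0)}]$.
\end{enumerate}
The interchange needs a difference-quotient bound, uniform on $[0,c_+]$, for both representations. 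The paper obtains it from Chernoff bounds on the maximizing indices together with $R_k(c)\le R_k(c_+)$.

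A smaller point: your integrability bound in the first step should use $B_{b_2(0)}$, as in your second step, rather than customer~0's own group. The customers from $b_1(b_2(0))$ to $b_2(0)-1$ precede the station-2 busy period, so they are not covered by your bound.
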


We next analyze the second station for the fixed \(c\). Its population immediately after an arrival takes values in \(\{1,2,\ldots\}\). Exponential service can be represented by an independent Poisson clock of rate \(\mu\), whose points remove a customer whenever the queue is nonempty. Let \(M(t)\) denote its number of points over an interval of length \(t\), and let \(E\) be independent exponential of rate \(\lambda\).

Let \(P\) be the transition kernel across an interval \(c\), including the arrival at its end, and let \(V\) be the corresponding kernel across \(c+E\). Thus, for a bounded function \(f\), \((Pf)(q)=\E[f(1+(q-M(c))^+)]\) and \((Vf)(q)=\E[f(1+(q-M(c+E))^+)]\). Kernel products act in chronological order. Stochastic order between kernels means pointwise comparison after application to every bounded increasing function. Both \(P\) and \(V\) preserve increasing functions.

We also need the substochastic kernels \(\widetilde P\) and \(\widetilde V\) that stop when the queue first empties. For a nonnegative function \(f\) on \(\{1,2,\ldots\}\), define \((\widetilde P f)(q)=\E[f(1+(q-M(c))^+)\ind\{M(c)<q\}]\) and \((\widetilde V f)(q)=\E[f(1+(q-M(c+E))^+)\ind\{M(c+E)<q\}]\). On the survival event, these function arguments equal \(q-M(c)+1\) and \(q-M(c+E)+1\), respectively; on all other paths the contribution is zero. An inequality between these substochastic kernels below means pointwise inequality after application to every nonnegative function.

The distinction between a population observed after two arrivals and a busy period surviving to those arrivals is important.
\begin{lemma}[Interchanging a short and a long gap]\label{lem:kernels}
The kernels satisfy \(PV\le_{\rm st}VP,~\widetilde P\widetilde V\ge\widetilde V\widetilde P,~\widetilde P\boldsymbol{1}\ge\widetilde V\boldsymbol{1}\), where \(\boldsymbol{1}\) is the constant-one function. At state~1, we have
\begin{align}\label{eq_strict_survival}
 (\widetilde P\boldsymbol{1}-\widetilde V\boldsymbol{1})(1)
 &=\frac{e^{-\mu c}\mu}{\lambda+\mu}>0.
\end{align}
\end{lemma}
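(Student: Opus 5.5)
The plan is to put both orderings of the two gaps on a single probability space and compare them path by path. Condition on \(E\) and write \(T:=2c+E\). Use one Poisson clock of rate \(\mu\) on \([0,T]\), and write \(M(a,b]\) for its number of points in \((a,b]\). Start from population \(q\) at time \(0\). In \(PV\) the intermediate arrival occurs at \(s=c\); in \(VP\) it occurs at \(s=c+E\). In both cases a final arrival occurs at \(T\).

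\emph{Lindley representation.} Since clock points remove a customer only when the queue is nonempty, the population just before the arrival at \(T\) should be
\[
X_s=\max\bigl\{q+1-M(0,T],\;1-M(s,T],\;0\bigr\}.
\]
I will check this by applying the one-gap formula \((x-M)^+\) twice and using \(\bigl((y-a)^++1-b\bigr)^+=\max\{y+1-a-b,\,1-b,\,0\}\). The state after the last arrival is then \(1+X_s\).

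\emph{First claim, \(PV\le_{\rm st}VP\).} With the same clock, \(M(c,T]\) counts points in an interval of length \(c+E\), while \(M(c+E,T]\) counts points in an interval of length \(c\). The first interval contains the second, so \(M(c,T]\ge M(c+E,T]\), which gives \(X_c\le X_{c+E}\) pathwise. For any bounded increasing \(f\) this yields \(f(1+X_c)\le f(1+X_{c+E})\). Taking expectations over the clock and \(E\) gives \((PVf)(q)\le(VPf)(q)\) pointwise.

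\emph{Second claim, \(\widetilde P\widetilde V\ge\widetilde V\widetilde P\).} On the survival event for both steps, the terminal state is \(q+2-M(0,T]\) in either ordering, so only the survival indicators differ. Survival means \(M(0,s]<q\) and \(q-M(0,s]+1-M(s,T]>0\), that is,
\[
\{M(0,T]\le q\}\cap\{M(0,s]\le q-1\}.
\]
I will then condition on \(E\) and on \(M(0,T]=n\), so that the \(n\) points are i.i.d.\ uniform on \([0,T]\) and the terminal value \(f(q+2-n)\) is fixed.
\begin{itemize}
\item If \(n\le q-1\), survival is automatic under both orderings.
\item If \(n\ge q+1\), survival fails under both orderings.
\item If \(n=q\), survival fails exactly when all \(q\) points lie in \([0,s]\), which has probability \((s/T)^q\). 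This is smaller for \(s=c\) than for \(s=c+E\).
\end{itemize}
Hence, for every nonnegative \(f\), the conditional contribution of \(\widetilde P\widetilde V\) dominates that of \(\widetilde V\widetilde P\). Integrating over \(n\) and \(E\) gives the claim.

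\emph{Third claim and \eqref{eq_strict_survival}.} By definition \((\widetilde P\boldsymbol 1)(q)=\Prob\{M(c)<q\}\) and \((\widetilde V\boldsymbol 1)(q)=\Prob\{M(c+E)<q\}\). The first is at least the second because \(M\) is monotone in interval length. At \(q=1\) the difference is
\[
e^{-\mu c}-\E\bigl[e^{-\mu(c+E)}\bigr]=e^{-\mu c}\Bigl(1-\frac{\lambda}{\lambda+\mu}\Bigr)=\frac{e^{-\mu c}\mu}{\lambda+\mu},
\]
which is strictly positive.

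The main obstacle is the two-step Lindley formula in the first paragraph. It must be verified that clock points falling while the queue is empty are correctly discarded, and that the arrival-at-end convention (the \(+1\) after each gap) is placed consistently for both orderings. Once the formula is confirmed, the remaining steps are direct monotone couplings.
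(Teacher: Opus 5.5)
Your proposal is correct and follows the paper's proof: a common rate-\(\mu\) clock conditional on \(E\), with the intermediate arrival at \(c\) for \(PV\) and at \(c+E\) for \(VP\). Your two-step Lindley formula makes explicit the paper's verbal pathwise comparison of terminal populations, and your survival-probability computation at state~1 is identical to the paper's.

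The conditioning on \(M(0,T]=n\) in the stopped-kernel step is valid but unnecessary. Your survival event \(\{M(0,T]\le q\}\cap\{M(0,s]\le q-1\}\), combined with \(M(0,c]\le M(0,c+E]\), already shows pathwise that the \(VP\) survival event is contained in the \(PV\) one, with the same terminal state \(q+2-M(0,T]\). That containment is exactly the paper's argument.
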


Index departure groups from station 1 by \(g\in\Z\), with a typical group start as the origin, let \(\mathcal Q_g\) be the population at station 2 immediately after the first arrival of group \(g\), and let \(\pi\) be its stationary distribution. The state \(\mathcal Q_g\) depends on previous departure groups, the preceding additional gap, and downstream service, and is independent of the current group size. Independence of successive group sizes therefore gives the cycle kernel
\begin{equation}\label{eq_cycle_kernel}
 K=\sum\nolimits_{n\ge1}\nu_n P^{n-1}V,\qquad \pi K=\pi.
\end{equation}
For \(r\ge1\), let \(p_r=(\pi P^{r-1})(\{1\})\), the probability of zero downstream waiting at position \(r\), conditional on the group containing that position. Conditioning further on any particular group size \(n\ge r\) leaves this probability unchanged. For \(m\ge0\) and \(q\ge1\), let \(t_m(q)\) be the expected number of current and subsequent arrivals counted before the downstream queue next empties, when its current population is \(q\) and exactly \(m\) further arrivals remain in the current upstream group. Future additional gaps and groups have their usual independent laws. In particular, \(t_m(1)\) is the expected size of a downstream busy period starting at this position.

\begin{lemma}[Position within a departure group]\label{lem:position}
The sequence \(p_r\) is nonincreasing and strictly positive. The sequence \(t_m(q)\) is finite and nondecreasing in \(m\) for every \(q\), and
\begin{align}\label{eq_b_strict}
 t_1(1)-t_0(1)&\geq\frac{e^{-\mu c}\mu}{\lambda+\mu}>0.
\end{align}
\end{lemma}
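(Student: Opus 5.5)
We prove the three assertions of Lemma~\ref{lem:position} in turn, using Lemma~\ref{lem:kernels} throughout.

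\emph{The sequence \(p_r\).} Since \(1+(q-M(c))^+\ge1\), the constant function \(\boldsymbol 1\) is the only nonnegative function vanishing nowhere that can play the role of an indicator of state~1. The event \(\{1\}\) is the bottom of the state space, so \(\ind_{\{1\}}\) is decreasing. Monotonicity of \(p_r\) therefore amounts to \(\pi P^{r}\ge_{\rm st}\pi P^{r-1}\). By induction it suffices to show \(\pi P\ge_{\rm st}\pi\).

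Write \(\pi=\pi K=\sum_n\nu_n\pi P^{n-1}V\). The gap \(c+E\) stochastically exceeds \(c\), so \(Vf\le Pf\) for increasing \(f\); this is the pointwise consequence of more service points on a longer interval. The delicate part is comparing \(\pi\) with \(\pi P\) when \(\pi\) is itself defined through \(K\). I would use the interchange \(PV\le_{\rm st}VP\) repeatedly to move the long gap \(V\) to the front:
\[
\pi P=\sum_n\nu_n\pi P^{n-1}VP\ \ge_{\rm st}\ \sum_n\nu_n\pi P^{n-1}PV=\pi\Big(\sum_n\nu_nP^{n}V\Big).
\]
This reduces the claim to showing that the chain driven by ``\(P\) then \(K\)'' dominates the chain driven by \(K\), that is, \(\pi PK\ge_{\rm st}\pi K\).

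To close the argument I would use a coupling or fixed-point argument. Consider the iteration \(\mu_{j+1}=\mu_jK\) started from \(\mu_0=\pi P\ge_{\rm st}\)? Here I would instead start from the smallest state \(\delta_1\). The inequality \(\delta_1P\ge_{\rm st}\delta_1\) is trivial. Monotonicity of \(K\) and \(P\), together with \(PK\ge_{\rm st}KP\) (obtained from \(PV\le VP\) summed over \(n\)), propagates the inequality \(\mu_jP\ge_{\rm st}\mu_j\) through the iterates. Letting \(j\to\infty\) and using \(\mu_j\to\pi\) gives \(\pi P\ge_{\rm st}\pi\). The convergence holds by stability, since \(\E[S]<\E[A]\) with a positive atom at state~1.

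Strict positivity of \(p_r\) is immediate: \(P^{r-1}(q,\{1\})\ge\Prob\{M(c(r-1))\ge q\}\cdot e^{-\mu c}\dots>0\). More simply, with positive probability the clock produces enough points in each interval to empty the queue before the last arrival.

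\emph{The sequence \(t_m(q)\).} Write the recursion
\[
t_m(q)=1+(\widetilde P t_{m-1})(q),\qquad t_0(q)=1+(\widetilde V T)(q),
\]
where \(T(q')=\sum_n\nu_nt_{n-1}(q')\) is the value at the start of a fresh group. Finiteness follows from comparison with a stable \(G/M/1\) busy period: expected busy-period sizes are finite because \(\E[S]<\E[A]\) and \(B\) has an exponential tail.

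Monotonicity in \(m\) is proved by induction using \(\widetilde P\widetilde V\ge\widetilde V\widetilde P\) and \(\widetilde P\boldsymbol 1\ge\widetilde V\boldsymbol 1\). Unrolling, \(t_m\) is the expected count along a gap sequence of \(m\) short gaps followed by a long gap and then the future. Increasing \(m\) to \(m+1\) amounts to inserting a short gap before a long gap, and this comparison is exactly the interchange of \(\widetilde P\widetilde V\) versus \(\widetilde V\widetilde P\) applied to a nonnegative function. I would first prove the base case \(t_1\ge t_0\) directly, since it reduces to \(\widetilde P\ge\widetilde V\) applied to \(T\)-type functions. I expect this step to be the main obstacle: the cleanest route is a sample-path coupling in which the extra exponential service time only shortens survival.

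\emph{The bound~\eqref{eq_b_strict}.} We have \(t_1(1)-t_0(1)=(\widetilde P T')(1)-(\widetilde V T)(1)\) with \(T'=t_0\ge1\). Bounding below by the survival difference, \(\ge(\widetilde P\boldsymbol 1-\widetilde V\boldsymbol 1)(1)\), and applying \eqref{eq_strict_survival} gives the claim.
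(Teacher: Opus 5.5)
\paragraph{The \(p_r\) part.} Your argument for \(p_r\) is the paper's: iterate from \(\delta_1\), commute \(P\) past \(K\), and pass to the limit. One slip: the inequality you need, and the one that follows from \(PV\le_{\rm st}VP\), is \(PK\le_{\rm st}KP\), not \(PK\ge_{\rm st}KP\). Your own display already uses the correct direction.

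\paragraph{The gap: monotonicity of \(t_m\).} Write \(\mathcal Pf=\boldsymbol{1}+\widetilde Pf\) and \(\mathcal Vf=\boldsymbol{1}+\widetilde Vf\), so that \(t_m=\mathcal P^m\mathcal VT\) with \(T=\sum_n\nu_nt_{n-1}\). Then
\[
t_1-t_0=(\widetilde P\boldsymbol{1}-\widetilde V\boldsymbol{1})+(\widetilde P\widetilde V-\widetilde V\widetilde P)T+\widetilde V(\mathcal PT-T),
\qquad \mathcal PT=\sum\nolimits_n\nu_nt_n .
\]
Lemma~\ref{lem:kernels} handles the first two terms. The third term needs \(\mathcal PT\ge T\), i.e.\ \(\sum_n\nu_nt_n\ge\sum_n\nu_nt_{n-1}\), which is an averaged form of the monotonicity you are trying to prove. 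So an induction on \(m\) cannot start: its base case \(t_1\ge t_0\) already presupposes all the other cases.

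Your suggested reduction to ``\(\widetilde P\ge\widetilde V\) applied to \(T\)-type functions'' does not rescue this. The inequality \(\widetilde Pf\ge\widetilde Vf\) is available only for nonnegative increasing \(f\). Using it gives \(t_1-t_0\ge\widetilde V(t_0-T)\), which has the wrong sign, because \(t_0\le T\) is exactly what you expect. The final step has the same hole. Bounding \(t_1(1)-t_0(1)\) below by \((\widetilde P\boldsymbol{1}-\widetilde V\boldsymbol{1})(1)\) means discarding the second and third terms, and the third is nonnegative only once \(\mathcal PT\ge T\) is known.

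\paragraph{The missing idea.} Break the circularity by truncating the horizon in the number of upstream groups, not by inducting on \(m\).
Let \(\mathcal Gf=\sum_n\nu_n\mathcal P^{n-1}\mathcal Vf\). Then \(\mathcal G^j0\) is the expected count over the first \(j\) groups, and \(T=\lim_j\mathcal G^j0\) by monotone convergence.
The affine interchange \(\mathcal P\mathcal Vf\ge\mathcal V\mathcal Pf\) for \(f\ge0\) combines \(\widetilde P\widetilde V\ge\widetilde V\widetilde P\) with \(\widetilde P\boldsymbol{1}\ge\widetilde V\boldsymbol{1}\). It gives \(\mathcal P\mathcal Gf\ge\mathcal G\mathcal Pf\).
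Iterating, \(\mathcal P\mathcal G^j0\ge\mathcal G^j\mathcal P0=\mathcal G^j\boldsymbol{1}\ge\mathcal G^j0\). In words, the inserted short gap is pushed past all \(j\) groups to the end of the finite horizon, where it contributes \(\mathcal P0=\boldsymbol{1}\ge0\).
Letting \(j\to\infty\) gives \(\mathcal PT\ge T\). All three terms above are then nonnegative, which yields \(t_1\ge t_0\) and the bound \eqref{eq_b_strict} at state~1. Applying the monotone map \(\mathcal P^m\) then gives \(t_{m+1}\ge t_m\).

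\paragraph{Finiteness.} Your comparison with a ``stable \(G/M/1\) busy period'' is not literally available, because the input to station~2 is not renewal. You need a group-level argument instead. For example, the group-embedded walk \(q+\sum(B-M(cB+E))\) has drift \(\E[B](1-\mu/\lambda)<0\), and the paper gives an exponential bound of this type.
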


We are now ready to prove Theorem \ref{thm:mean} by showing \(\E[b_2(0)-b_1(b_2(0))+b_1(0)]<0\).
Let \(L_k\) be the number of customers in the busy period starting with customer \(k\) at station 2, with \(L_k=0\) when \(k\) does not start one. For any nonnegative stationary customer mark \(f_k\), stationarity under customer-index shifts gives
\begin{align*}
 \E[f_{b_2(0)}]
 &=\sum\nolimits_{j\leq0}\E[f_j\ind\{b_2(0)=j\}]
 =\sum\nolimits_{k\geq0}\E[f_0\ind\{b_2(k)=0\}]
 =\E[f_0L_0].
\end{align*}
The middle equality shifts the event \(b_2(0)=j\) by \(-j\), so it becomes \(b_2(-j)=0\). The exponential bound for \(t_m\) in the proof of Lemma~\ref{lem:position}, together with the exponential tail of \(B\), gives \(\E[B_0L_0]<\infty\) by conditioning on \((B_0,R_0)\) as below. Applying the identity first to \(|R_k-H_k|\), and then to the positive and negative parts of \(R_k-H_k\), therefore yields
\begin{equation}\label{eq_mass_transport}
 \E[R_{b_2(0)}-H_{b_2(0)}]=\E[(R_0-H_0)L_0].
\end{equation}

We next compute the conditional mean of \(L_0\). Fix \(B_0=n\) and \(R_0=r\). The state at station 2 seen by customer 0 is determined by the state at the first arrival of its group from station 1, the first \(r-1\) within group gaps, and the associated downstream service completions. Conditional on the group reaching position \(r\), those gaps are exactly \(c\), regardless of the eventual value of \(B_0\). Moreover, the current group size is determined by the station-1 Poisson arrivals after the group begins, while the downstream service process is independent, and the downstream state at the first arrival of the group depends only on the past. Consequently, conditioning further on \(B_0=n\) does not change the probability that customer~0 finds station~2 empty, and this probability is \(p_r\).

If customer~0 does not find the second station empty, then \(L_0=0\). If it does, it starts a downstream busy period with postarrival population one. Given \(B_0=n,R_0=r\), exactly \(n-r\) further arrivals remain in the current group, all separated by gaps \(c\). After that group ends, the boundary idle time, all later group sizes and boundary gaps, and all future downstream service clock increments are independent of the past and have the laws used to define \(t_{n-r}(1)\). Hence \(\E[L_0\mid B_0=n,R_0=r]=p_r t_{n-r}(1)\).

Because \(H_0=B_0-R_0+1\), on the event \(B_0=n,R_0=r\) we have \(R_0-H_0=2r-n-1\). Combining this fact with \eqref{eq_group_palm}, \eqref{eq_centered}, and \eqref{eq_mass_transport} gives the absolutely convergent identity
\begin{align}\label{eq_group_derivative}
 \E[b_2(0)-b_1(b_2(0))+b_1(0)]
 &=\frac{1}{2\E[B]}\sum\nolimits_{n\geq1}\nu_n\sum\nolimits_{r=1}^{n}(2r-n-1)p_rt_{n-r}(1).
\end{align}
Fix \(n\). Lemma~\ref{lem:position} shows that \(p_r t_{n-r}(1)\) is nonincreasing in \(r\): the first factor is nonincreasing, and the second is also nonincreasing because \(t_m(1)\) is nondecreasing in \(m\) while \(n-r\) decreases. Pairing positions equidistant from the two ends of the group gives \(\sum\nolimits_{r=1}^{n}(2r-n-1)p_r t_{n-r}(1)=\sum\nolimits_{r=1}^{\lfloor n/2\rfloor}(n+1-2r)\{p_{n+1-r}t_{r-1}(1)-p_rt_{n-r}(1)\}\le0\). Thus every group size contributes a nonpositive term to \eqref{eq_group_derivative}. For \(n=2\), the inner sum is \(-p_1t_1(1)+p_2t_0(1)\), which is strictly negative because \(p_2\le p_1\), \(p_1>0\), and \(t_1(1)>t_0(1)\). Since \(\nu_2>0\) by \eqref{eq_group_size}, we conclude that \(\E[b_2(0)-b_1(b_2(0))+b_1(0)]<0\).

\section{Conclusion}\label{sec:conclusion}
In this paper, we prove that Niu's conjecture is true, solving a long standing open problem in queueing theory.  We provide two different proofs: one for the general case and one for a special case.  

\renewcommand{\theHsection}{appendix.\Alph{section}}
\Needspace{6\baselineskip}
\begin{APPENDIX}{}

\proof{Proof of Lemma~\ref{lem:reversal}.}
\label{app:global_proofs}
Since station~1 is an \(M/D/1\) queue, \(B\) has a Borel distribution with parameter \(\rho\), which immediately gives (\ref{eq_group_size}).  Furthermore, a busy period \(g\in\Z\) is followed by an exponentially distributed idle time \(E_g\), and then by the next busy period \(g+1\).  Writing \(B^{(g)}\) for the size of group \(g\) under this construction, both \(\{B^{(g)},g\in\Z\}\) and \(\{E_g, g\in\Z\}\) are IID and independent of each other.   

Now consider stationary customer~0.  In equilibrium, the busy period containing customer~0 is size biased, so \(\Prob\{B_0=n\}=n\nu_n/\E[B]\). Conditional on \(B_0=n\), customer~0 is uniformly located among the \(n\) positions in that group, so \(\Prob\{R_0=r\mid B_0=n\}=1/n\) for \(1\le r\le n\). Groups on either side of this central group retain the ordinary IID size law, and the additional gaps remain IID and independent of all group sizes and the central position. Therefore \(\Prob\{B_0=n,R_0=r\}=\Prob\{R_0=r\mid B_0=n\}\Prob\{B_0=n\}=\nu_n/\E[B]\), which proves \eqref{eq_group_palm}.

It remains to verify reversal of the boundary marks. In forward customer order, group \(g\) contributes \(B^{(g)}-1\) gaps equal to \(c\), followed by the boundary gap \(c+E_g\). After reversing the customer order and regrouping at the gaps larger than \(c\), this group is followed by \(c+E_{g-1}\), not by \(c+E_g\). In the group-start construction, reversal therefore transforms \(\{(B^{(g)},E_g)\}_{g\in\Z}\) into \(\{(B^{(g)},E_{g-1})\}_{g\in\Z}\), up to reversal of the group index. These block sequences have the same law because the size and gap sequences are independent and each is IID. Under equilibrium customer indexing, the central group retains its size-biased size, the ordinary groups on its two sides are interchanged, and the central position \(R_0=r\) becomes \(H_0=n-r+1\) when \(B_0=n\). The conditional uniformity of that position and independence of all additional gaps therefore give invariance of the marked departure sequence under reversal. Since \(\{S_k\}_{k\in\Z}\) is IID and independent of the first-station process, the invariance extends jointly to the downstream service marks. This proves the marked distributional identity \eqref{eq_marked_reversal}. In particular, \(R_0\) and \(H_0\) have the same distribution. Their means are finite because the group size seen by a stationary customer is size biased from an exponentially tailed \(B\).
\Halmos\endproof

\proof{Proof of Lemma~\ref{lem:centered}.}
By Lemma~\ref{lem:reversal} and independence of the service sequence,
\begin{align*}
 \E[W_{2,0}(c)]
 &=\E\big[\max\nolimits_{j\geq0}\big\{\sum\nolimits_{i=0}^{j-1}S_i-D_{1,j}(c)+D_{1,0}(c)\big\}\big].
\end{align*}

To apply the IPA interchange, fix an upper bound \(0<c_+<1/\lambda\). For \(0\leq c\leq c_+\) and \(j\geq1\),
\begin{align*}
 \Xi_{-j}-D_{1,0}(c)+D_{1,-j}(c)
 &\leq\Xi_{-j}+\tau_{-j}+W_{1,-j}(c_+),\\
 \sum\nolimits_{i=0}^{j-1}S_i-D_{1,j}(c)+D_{1,0}(c)
 &\leq\sum\nolimits_{i=0}^{j-1}S_i-\tau_j+W_{1,0}(c_+).
\end{align*}
Both upper bounds have exponential moment \(\E[e^{\theta W_{1,0}(c_+)}](\E[e^{\theta S}]\E[e^{-\theta A}])^j\) for sufficiently small \(\theta>0\), with a finite first factor and a base less than one. A Chernoff bound therefore bounds both maximizing indices, uniformly on \([0,c_+]\), by a random integer with an exponential tail. Since \(dD_{1,k}(c)/dc=R_k(c)\leq R_k(c_+)\), the exponential tail of the first-station busy periods and Cauchy--Schwarz give an integrable bound on the difference quotients. The standard IPA interchange (\citealt{hu1992,ghs1991}) yields
\begin{align*}
 \frac{d}{dc}\E[W_{2,0}(c)]
 &=\E[R_{b_2(0)}-R_0]
 =\E[H_0-H_{b_2(0)}].
\end{align*}
Here the last expression follows by reversing the differentiated forward maximum with service marks \(S_{-k-1}\). Its maximizing customer becomes \(b_2(0)\), and beginning and end position marks are interchanged. Averaging the two position expressions and using \(\E[R_0]=\E[H_0]\) proves \eqref{eq_centered}.
\Halmos\endproof

\proof{Proof of Lemma~\ref{lem:kernels}.}
Condition on \(E=e\) and couple two systems by the same rate-\(\mu\) service clock on \([0,2c+e]\), starting from the same population \(q\). In the \(PV\) system the next two arrivals occur after gaps \(c\) and \(c+e\), so the intermediate arrival is at time \(c\). In the \(VP\) system the gaps are reversed, and the intermediate arrival is at time \(c+e\). Both systems receive the terminal arrival at time \(2c+e\).

Before time \(c\) the systems coincide. At time \(c\), \(PV\) receives one extra customer. Between \(c\) and \(c+e\), this extra customer can only prevent a service clock point from being wasted; it cannot make the \(PV\) population more than one larger than the \(VP\) population. When the \(VP\) system receives its intermediate arrival at time \(c+e\), its population becomes at least the \(PV\) population. From that time onward both systems see the same service clock and the same terminal arrival, so this order is preserved. Hence the terminal population under \(PV\) is no larger pathwise, proving \(PV\le_{\rm st}VP\).

For the stopped kernels, suppose instead that the \(VP\) busy period survives until the terminal arrival. Then it has not emptied during the longer first gap \([0,c+e]\). The two systems agree up to time \(c\), and the earlier intermediate arrival in \(PV\) can only help it remain nonempty on \([c,c+e]\). Thus survival of \(VP\) implies survival of \(PV\). On this common survival event no service opportunity has been wasted in either system; immediately after the \(VP\) intermediate arrival at time \(c+e\), both populations are therefore equal, and they remain equal until the terminal arrival. Consequently, for every nonnegative \(f\), every contribution to \(\widetilde V\widetilde P f\) also appears in \(\widetilde P\widetilde V f\) with the same terminal state, while \(PV\) may survive on additional paths. This proves \(\widetilde P\widetilde V\ge\widetilde V\widetilde P\). The inequality \(\widetilde P\boldsymbol{1}\ge\widetilde V\boldsymbol{1}\) follows because survival over a gap of length \(c+E\) implies survival over its initial subinterval of length \(c\). From state~1 these survival probabilities are \(e^{-\mu c}\) and \(e^{-\mu c}\E[e^{-\mu E}]\); using \(\E[e^{-\mu E}]=\lambda/(\lambda+\mu)\) gives \eqref{eq_strict_survival}.
\Halmos\endproof

\proof{Proof of Lemma~\ref{lem:position}.}
We first prove the monotonicity of \(p_r\). From \(PV\le_{\rm st}VP\) and \eqref{eq_cycle_kernel}, \(PK=\sum\nolimits_{n\ge1}\nu_nP^nV\le_{\rm st}\sum\nolimits_{n\ge1}\nu_nP^{n-1}VP=KP\). The kernels \(P\), \(V\), and \(K\) preserve stochastic order, so iteration gives \(\delta_1K^jP\ge_{\rm st}\delta_1PK^j\ge_{\rm st}\delta_1K^j\) for \(j\ge1\); the second inequality uses \(\delta_1P\ge_{\rm st}\delta_1\), since state~1 is the smallest possible postarrival population.

To pass to the limit, recall from the definition of \(\pi\) that \(\mathcal Q_g\) is the downstream population immediately after the first arrival of upstream group \(g\). Under the stationary group construction, the next group size, its boundary gap, and the future service clock increments are independent of the past, so \((\mathcal Q_g)\) is a stationary Markov chain with kernel \(K\) and marginal law \(\pi\); hence \(\pi K=\pi\). The chain is irreducible: from any state, a one-customer group followed by sufficiently many service completions during the boundary gap reaches state~1 with positive probability, while from state~1 repeated one-customer groups with no service completion can reach any larger state. Since \(K(1,1)>0\), the chain is also aperiodic. An irreducible countable state chain with invariant probability \(\pi\) is positive recurrent, so \(\delta_1K^j\to\pi\). Taking limits in the preceding stochastic order gives \(\pi P\ge_{\rm st}\pi\), and applying further powers of \(P\) shows that \(\pi P^{r-1}\) is nondecreasing in stochastic order. Its mass at the minimum state must therefore be nonincreasing, which is exactly \(p_{r+1}\le p_r\). Irreducibility gives \(p_1>0\), and \(P(q,\{1\})=\Prob\{M(c)\ge q\}>0\) implies \(p_r>0\) for every \(r\).

We next compare the busy-period counts. Define the affine operators \(\mathcal P f=\boldsymbol{1}+\widetilde P f\) and \(\mathcal V f=\boldsymbol{1}+\widetilde V f\). The constant term counts the current arrival; the second term continues the count only when the busy period survives to the next arrival. Lemma~\ref{lem:kernels} gives, for every bounded nonnegative \(f\),
\begin{align}\label{eq_affine_order}
 \mathcal P\mathcal V f-\mathcal V\mathcal P f
 &=\widetilde P\boldsymbol{1}-\widetilde V\boldsymbol{1}+(\widetilde P\widetilde V-\widetilde V\widetilde P)f\ge0.
\end{align}
Applying this identity to bounded truncations and then using monotone convergence extends the comparison \(\mathcal P\mathcal V f\ge\mathcal V\mathcal P f\) to all nonnegative \(f\).
Let \(h(q)\) be the expected count starting with the first arrival of a fresh upstream group, and define \(\mathcal G f=\sum\nolimits_{n\ge1}\nu_n{\mathcal P}^{n-1}\mathcal V f\). Processing only the first \(j\) groups gives \(\mathcal G^j0\), so monotone convergence yields \(h=\lim_{j\to\infty}\mathcal G^j0\). If exactly \(m\) further arrivals remain in the current group, the same counting rule gives \(t_m={\mathcal P}^m\mathcal Vh\). For sufficiently small \(\theta>0\), let \(\kappa=\E[\exp\{\theta B+\mu(cB+E)(e^{-\theta}-1)\}]<1\), where the strict inequality follows because the derivative at zero is \(\E[B](1-\mu/\lambda)<0\). On survival through a group the next population is \(q+B-M(cB+E)\), so the expected contribution of group \(j+1\) is at most \(\E[B]\kappa^j e^{\theta q}\). Summing over \(j\geq0\) gives \(h(q)\leq\E[B]e^{\theta q}/(1-\kappa)\) and \(t_m(q)\leq m+1+\E[B]e^{\theta(q+m+1)}/(1-\kappa)<\infty\). The value of \(\theta\) can be chosen arbitrarily small, which also gives the integrability used in \eqref{eq_mass_transport}. 

Equation~\eqref{eq_affine_order} allows one \(\mathcal P\) to be moved past \(\mathcal V\) to the right, and hence \(\mathcal P\mathcal G f\ge\mathcal G\mathcal P f\) for nonnegative \(f\). Iterating and using \(\mathcal P0=\boldsymbol{1}\ge0\) gives \(\mathcal P\mathcal G^j0\ge\mathcal G^j\mathcal P0\ge\mathcal G^j0\). Letting \(j\to\infty\) and using monotone convergence in the substochastic kernels yields \(\mathcal Ph\ge h\). Therefore \(\mathcal P\mathcal Vh\ge\mathcal V\mathcal Ph\ge\mathcal Vh\), and applying the order preserving operator \({\mathcal P}^m\) gives \(t_{m+1}\ge t_m\). Finally, at state~1, \eqref{eq_affine_order} and \eqref{eq_strict_survival} yield \(t_1(1)-t_0(1)\ge(\widetilde P\boldsymbol{1}-\widetilde V\boldsymbol{1})(1)>0\), proving \eqref{eq_b_strict}.
\Halmos\endproof

\end{APPENDIX}

\bibliographystyle{informs2014}
\bibliography{references}

\end{document}